\newcommand{\keywords}[1]{\par\addvspace\baselineskip
\noindent\keywordname\enspace\ignorespaces#1}
\newcommand\articlecommand[1]{{\textsf{#1}}}
\newcommand{\inference}[3]{\ensuremath{\frac{~#2~}{~#3~}{\mbox{~\articlecommand{#1}}}}}
\newcommand{\squeezeup}{\vspace{-5mm}}
\begin{document}

\mainmatter  

\title{Arithmetic in Metamath, Case Study: Bertrand's Postulate}


\author{Mario Carneiro}
%

\institute{The Ohio State University, Columbus OH, USA}

\maketitle

\begin{abstract}
Unlike some other formal systems, the proof system Metamath has no built-in concept of ``decimal number'' in the sense that arbitrary digit strings are not recognized by the system without prior definition. We present a system of theorems and definitions and an algorithm to apply these as basic operations to perform arithmetic calculations with a number of steps proportional to an arbitrary-precision arithmetic calculation. We consider as case study the formal proof of Bertrand's postulate, which required the calculation of many small primes. Using a Mathematica implementation, we were able to complete the first formal proof in Metamath using numbers larger than 10. Applications to the mechanization of Metamath proofs are discussed, and a heuristic argument for the feasability of large proofs such as Tom Hales' proof of the Kepler conjecture is presented.
\keywords{Arithmetic $\cdot$ Metamath $\cdot$ Bertrand's postulate $\cdot$ decimal number $\cdot$ natural numbers $\cdot$ large proofs $\cdot$ ATP $\cdot$ Mathematica $\cdot$ formal proof}
\end{abstract}

\section{Introduction}\label{sec:intro}

The Metamath system, consisting of a formal proof language and computer verification software, was developed for the purpose of formalizing mathematics in a foundational theory which is as minimal as possible while still being able to express proofs ``efficiently'' (in a sense we will make more precise later) \cite{metamath}. Although Metamath supports arbitrary axiom systems, we are mainly concerned in this paper with the \texttt{set.mm} database, which formalizes much of the traditional mathematics curriculum into a ZFC-based axiomatization \cite{setmm}.

In this context, one can define a model of the real numbers and show that it satisfies the usual properties, and within this set we have the natural numbers $\bbbn$ and can give a name to the numbers $1,2,3,\ldots\in\bbbn$. One important point of comparison to other proof languages such as Mizar \cite{mizar} is that not every sequence of decimal digits is interpreted as a natural number.  In particular, the goal is to show rigorous derivations directly from ZFC axioms with complete transparency and not to depend on indirect proofs of correctness of, say, a computer arithmetic algorithm.  A sequence of digits is treated as any other identifier and only represents a natural number if it has been defined to do so.  The list of sequences so defined is quite short -- we have the definitions
$$2:=1+1,\qquad 3:=2+1,\qquad4:=3+1,\qquad5:=4+1,\qquad6:=5+1,$$
$$7:=6+1,\qquad 8:=7+1,\qquad9:=8+1,\qquad10:=9+1,$$
and this constitutes a complete listing of the defined integers in \texttt{set.mm} (not including $0$ and $1$, which are defined as part of the field operations).

The primary application of the \texttt{set.mm} has been for abstract math, and such small numbers were sufficient for prior theorems, but initial attempts at Bertrand's postulate showed that a new method was needed in order to systematize arithmetic on large numbers.

\subsection{Bertrand's Postulate}\label{sec:bpos}

\begin{theorem} [Chebyshev, Erd\H{o}s]\label{th:bpt}
For every $n\in\mathbb N$ there is a prime $p$ satisfying $n< p\le 2n$. 
\end{theorem}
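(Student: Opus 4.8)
The plan is to follow the classical argument of Erd\H{o}s, which sandwiches the central binomial coefficient $\binom{2n}{n}$ between an exponential lower bound and a product-over-primes upper bound and then derives a contradiction from the assumption that no prime lies in $(n,2n]$. First I would establish $\binom{2n}{n}\ge\frac{4^n}{2n}$ for $n\ge 1$: since $4^n=(1+1)^{2n}=\sum_{k=0}^{2n}\binom{2n}{k}$ has $2n+1$ summands with $\binom{2n}{n}$ the largest, and the two end terms $\binom{2n}{0}=\binom{2n}{2n}=1$ together are at most $\binom{2n}{1}=2n$, the quantity $4^n$ is bounded by $2n$ copies of $\binom{2n}{n}$. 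Next, from Legendre's formula the exponent $R_p(n)$ of a prime $p$ in $\binom{2n}{n}$ equals $\sum_{i\ge 1}\bigl(\lfloor 2n/p^i\rfloor-2\lfloor n/p^i\rfloor\bigr)$; each summand is $0$ or $1$ and all vanish once $p^i>2n$, so $p^{R_p(n)}\le 2n$. Two refinements make the proof work: if $p>\sqrt{2n}$ then only the $i=1$ term can survive, so $R_p(n)\le 1$; and if $\tfrac{2n}{3}<p\le n$ with $n\ge 5$, then $\lfloor 2n/p\rfloor=2$, $\lfloor n/p\rfloor=1$ and $p^2>2n$, so $R_p(n)=0$.

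Now suppose for contradiction that there is no prime $p$ with $n<p\le 2n$. Then every prime dividing $\binom{2n}{n}$ is either at most $\sqrt{2n}$ or lies in $(\sqrt{2n},\tfrac{2n}{3}]$. Using $p^{R_p(n)}\le 2n$ for the at most $\sqrt{2n}$ primes of the first kind and $R_p(n)\le 1$ for the rest, this yields
$$\binom{2n}{n}\ \le\ (2n)^{\sqrt{2n}}\prod_{p\le 2n/3}p .$$
For the primorial factor I would prove $\prod_{p\le x}p\le 4^x$ by strong induction on $x$: the even case follows at once from the odd one below it, and for $x=2m+1$ one factors $\prod_{p\le 2m+1}p=\bigl(\prod_{p\le m+1}p\bigr)\bigl(\prod_{m+1<p\le 2m+1}p\bigr)$, where the second factor divides $\binom{2m+1}{m}\le 4^m$ and the first is at most $4^{m+1}$ by the inductive hypothesis. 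Combining this with the lower bound gives $4^{n/3}\le(2n)^{1+\sqrt{2n}}$, and taking logarithms shows the inequality is false for every $n$ beyond an explicit bound $N_0$ (a few hundred with the estimates above).

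It remains to dispatch the finitely many cases $n\le N_0$. For these I would exhibit an explicit strictly increasing list of primes, $2,3,5,7,13,23,43,83,163,317,631,\ldots$, reaching past $N_0$, in which each term is less than twice its predecessor; then for any $n\le N_0$ the least list element exceeding $n$ lies in $(n,2n]$. I expect the main obstacle to be not the combinatorial skeleton, which is short, but the \emph{arithmetic}: verifying the lower bound for $\binom{2n}{n}$, running the primorial induction, checking the final logarithmic inequality at $N_0$, and above all confirming the membership claims for the small-cases list all require computations with integers far larger than $10$, each of which Metamath must expand into primitive inference steps --- exactly the bottleneck the decimal-arithmetic algorithm of this paper is designed to remove.
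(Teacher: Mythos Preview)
Your outline is correct and is precisely the Erd\H{o}s argument the paper targets: bound $\binom{2n}{n}$ below by $4^n/(2n)$, control prime-power contributions via Legendre's formula, kill the range $(2n/3,n]$, invoke the primorial bound $\prod_{p\le x}p\le 4^x$, and mop up the small $n$ with a Bertrand chain of primes. Two minor discrepancies are worth flagging. First, the paper's originally intended cutoff from the Erd\H{o}s estimates is $n>4000$, with the chain extended through $139,\,1259,\,2503,\,4001$; your ``a few hundred'' comes from the particular inequality $4^{n/3}\le(2n)^{1+\sqrt{2n}}$, which is a legitimate tightening but not what the formalization first aimed at. Second---and this is the one substantive divergence---the completed Metamath proof does \emph{not} use the textbook Erd\H{o}s bounds at all: after the arithmetic infrastructure was built, the authors adopted Tochiori's sharpening of the estimates, which pushes the asymptotic part down to $n\ge 64$ and renders the large-prime verifications ($1259,\,2503,\,4001\in\bbbp$) unnecessary. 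So your plan matches the paper's initial strategy exactly, but the final artifact trades a longer analytic argument for a much shorter finite check.
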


This statement was conjectured by Joseph Bertrand in 1845, from which the problem gets its name, and it was proven in 1852 by Chebyshev. Our version of the theorem looks like this:
$$\inference{bpos}{}{\vdash n\in\bbbn\to\exists p\in\bbbp (n< p\land p\le 2n)}\footnote{The sans-serif labels mentioned in this paper refer to theorem statements in \texttt{set.mm}; they can be viewed at e.g. \url{http://us.metamath.org/mpegif/bpos.html} for \textsf{bpos}.}$$
Although the complete proof of this theorem is not the purpose of this paper, this was the motivating problem that led to the developments described here. For our formalization we targeted not Chebyshev's proof but rather a simpler proof due to Paul Erd\H{o}s. The proof is based on a detailed asymptotic analysis of the central binomial coefficients ${2n\choose n}$, and by keeping track of the bounds involved this shows Theorem~\ref{th:bpt} for $n>4000$. A usual exposition of the proof will observe that the other $4000$ cases can be shown by means of the sequence
\begin{equation}
2,3,5,7,13,23,43,83,139,163,317,631,1259,2503,4001,
\end{equation}
which can be seen to consist of primes $p_k$ such that $p_{k+1}<2p_k$.

It is this last statement which is the most problematic for a formal system which cannot handle large numbers, because verifying that large numbers are prime requires even larger calculations, and doing such calculations by hand on numbers of the form $1+1+\dots+1$ (as in our definition of the numbers up to $10$) is quite impractical, requiring $O(n^2)$ operations to multiply numbers of order $n$.

\subsection{Decimal Arithmetic}

To solve the problem of the space requirements of unary numbers, the standard approach is to use base-$10$ arithmetic, or more generally base-$b$ arithmetic for $b\ge2$, in which a nonnegative integer $n\in\bbbn_0$ is represented by an expression of the form
\begin{equation}
n=\sum_{i=0}^ka_ib^i=b\cdot(b\cdot (b\cdot a_k)+\dots+a_1)+a_0.
\end{equation}
Here $0\le a_i<b$, and arithmetic is performed relative to this representation, with ``addition with carry'' and long multiplication, using Horner's method in order to efficiently recurse through the structure of the expression. (We postpone the precise description of these (grade school) algorithms to Section~\ref{sec:mmth}.)

\subsubsection{All Calculation is Addition, Multiplication and Ordering of $\bbbn_0$.}

One important observation which is helpful to identify is that

\spnewtheorem{observation}{Observation}{\itshape}{\rmfamily}
\begin{observation}\label{ob:num}
All calculations of a numerical nature can be reduced to the three operations $x+y,x\cdot y,x<y$ applied to nonnegative integers.
\end{observation}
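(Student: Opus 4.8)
The plan is to argue informally, by a structural reduction over the numerical objects and operations that can appear in a calculation, that each reduces to finitely many applications of $x+y$, $x\cdot y$, and $x<y$ on members of $\bbbn_0$. I would first dispatch the remaining arithmetic relations on $\bbbn_0$ itself. Equality is $x=y\iff\lnot(x<y)\land\lnot(y<x)$ (equivalently $x\le y\le x$); truncated subtraction $x-y=z$ (for $y\le x$) is certified by $y+z=x$; integer quotient and remainder by $x=y\cdot q+r$ together with $r<y$; exponentiation $x^n$ for a numeral $n$ unfolds to $n$ multiplications; and divisibility is $y\mid x\iff\exists q\,(x=y\cdot q)$, which may be bounded by $q\le x$. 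Derived notions -- $n!$, $\binom{m}{n}$, $\gcd(m,n)$, and primality of $p$ (``$1<p$, and there are no $a,b$ with $1<a$, $a<p$ and $ab=p$'') -- are then finite combinations of the same primitives.

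Next I would extend from $\bbbn_0$ to the integers and the rationals. An integer is $\pm m$ with $m\in\bbbn_0$; its ring and order operations are given by a finite case split on the signs whose branches touch only $+$, $\cdot$ and $<$ applied to the magnitudes. A rational is a pair $(a,b)$ of integers with $b\neq0$; then $\tfrac{a}{b}=\tfrac{c}{d}\iff ad=bc$, the field operations are the schoolbook formulas ($\tfrac{a}{b}+\tfrac{c}{d}=\tfrac{ad+bc}{bd}$, and so on), and $\tfrac{a}{b}<\tfrac{c}{d}$ is a sign-corrected cross-multiplication -- in each case a finite combination of the integer operations just reduced, and hence of the three primitives on $\bbbn_0$.

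The one genuinely delicate point, and the step I expect to be the main obstacle, is the treatment of real (and complex) quantities, since $\mathbb R$ is uncountable and cannot be ``represented'' at all. Here the observation must be read as a claim about the numerical \emph{content} of a proof rather than about $\mathbb R$: every real that is actually exhibited during a calculation -- a bound, an explicit value, an approximation -- is introduced together with rational data, and every equality or inequality between two such reals is discharged by producing rationals that pinch or separate them, which is once more a finite list of rational comparisons. For instance $\sqrt2<\tfrac{17}{12}$ reduces to $17^2>2\cdot12^2$, and a strict bound between two reals presented by convergent rational approximations closes as soon as a rational lower bound for the larger exceeds a rational upper bound for the smaller. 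Thus the residual real-number reasoning is limiting or transcendental in character, but the finitely many \emph{calculations} into which it bottoms out are arithmetic and ordering of rationals, hence of $\bbbn_0$. Making this airtight amounts to observing that each Metamath proof requires only finitely many such rational leaf-facts, and that the decimal-arithmetic apparatus of Section~\ref{sec:mmth} certifies every one of them.
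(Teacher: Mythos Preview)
Your structural reduction is a reasonable informal justification, but you should be aware that the paper does not attempt to prove this statement at all. It explicitly remarks that Observation~\ref{ob:num} ``is not a strictly mathematical statement without additional explanation of what exactly is meant by `calculation' or `numerical nature','' and that ``it is not really possible to prove'' it. The paper's only support for the observation is a pair of high-level appeals: first, that real computers carry out all such computations with an ALU offering only these primitive operations on bit strings; second, that the observation can be read as the Church--Turing thesis combined with the fact that arithmetic on $\bbbq$ is expressible in Peano Arithmetic. A few worked examples (square-root bounds, primality, a binomial inequality) stand in for the general claim.

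Your approach is thus genuinely different in style: you give a bottom-up case analysis over the numerical objects (integers, rationals, and then rational certificates for real inequalities), whereas the paper gives a top-down thesis-level argument and leaves the details implicit. Your version has the advantage of making the reduction concrete and tracking where the real-number case becomes delicate; the paper's version has the advantage of honesty about the informal status of the claim and of not pretending to a completeness that neither argument can really deliver. Either is acceptable as motivation, but if you present yours you should flag, as the paper does, that the statement is a heuristic scoping remark rather than a theorem.
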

For instance:

\begin{equation}\label{eq:sqrt2}
\sqrt 2>1.414\quad\mbox{because}\quad1414\cdot1414<2\cdot 1000\cdot1000,
\end{equation}
\begin{equation}
11\mbox{ is prime}\quad\mbox{because}\quad 11=2\cdot 5+1=3\cdot 3+2,
\end{equation}
\begin{equation}\label{eq:pwbc}
\frac{4^4}4<{2\cdot 4\choose 4}\quad\mbox{because}\quad 4\cdot4\cdot4\cdot2\cdot3\cdot4<5\cdot6\cdot7\cdot8.
\end{equation}

Note that as this is a formal proof the goal is not in \emph{calculating} the result itself, which we already know or can verify externally, but rather in \emph{efficiently verifying the numerical claim}, which comes with its own challenges. For primality testing, this process is known as a primality certificate, and for larger primes we use so-called ``Pratt certificates'' for prime verification \cite{pratt}. In general, there may be many non-numerical steps involved to set up a problem into an appropriate form, such as the squaring and multiplication by $1000$ in equation~\ref{eq:sqrt2}, or we may want to add such steps as an additional reduction on the equation so that we avoid an unnecessarily complicated calculation, such as canceling the common factor $6\cdot 4$ in equation~\ref{eq:pwbc}. However, these ``framing'' steps are comparatively few so that it is worthwhile to coerce these calculations into the framework described here for even moderately large numbers, say $n>30$.

It is not really possible to prove Observation~\ref{ob:num} as it is not a strictly mathematical statement without additional explanation of what exactly is meant by ``calculation'' or ``numerical nature''. Its primary purpose is in justifying the scope of the arithmetic system to be built, and it is justified by the observation that computers can compute the various constants and special functions using only a small fixed instruction set and an arithmetic logic unit (ALU) that supports only these basic operations on bit strings. This observation can also be viewed as a combination of the Church-Turing thesis and the observation that arithmetic on $\bbbq$ can be expressed in the language of Peano Arithmetic.

Section~\ref{sec:mmth} describes the metatheory of ``decimal numbers'' as they appear in \texttt{set.mm}, and the theorems which form the basic operations upon which the algorithm is built. Section~\ref{sec:mma} describes the Mathematica implementation of a limited-domain automated theorem prover (ATP) for arithmetic using the decimal theorems as a base, and Section~\ref{sec:results} surveys the outcome of the project.

\section{Setting Up ``Decimal'' Arithmetic Theorems in \texttt{set.mm}}\label{sec:mmth}

\subsubsection{The State of the Database Before this Project.}

\begin{definition}\label{df:evaln}
When describing terms, we will use the notation $\langle x\rangle$ to refer to the defined number term with value $x$, assuming $0\le x\le10$, e.g. $3\cdot 3$ is a literal expression ``$3\cdot 3$'' while $\langle 3\cdot 3\rangle$ is the term $9$.
\end{definition}
We take for granted the following facts, already derived in the database:

\begin{itemize}
\item We have general theorems for the field operations, so that $a+b=b+a$, $a\cdot b=b\cdot a$, $a+0=a$, and $a\cdot 1=a$. We also have theorems of the form $x=x$;\ \ $x=y\vdash y=x$;\ \ $x=y,y=z\vdash x=z$ available.

\item As we have already mentioned in Section~\ref{sec:intro}, the numbers $0$-$10$ have been defined, yielding definitional theorems of the form $\langle n+1\rangle=n+1$ for each $n\in\{1,\dots,9\}$, and for each such number $n$ we have the theorems $n\in\bbbn_0$ and $n\in\bbbn$ (except $n=0$ which only has $0\in\bbbn_0$).

\item We have addition and multiplication facts for these numbers. That is, if $1\le n\le m\le 10$ and $m+n\le 10$ we have the theorem $m+n=\langle m+n\rangle$, and similarly if $1<n\le m\le 10$ and $mn\le 10$ then $m\cdot n=\langle mn\rangle$ is a theorem. By combining these facts with the general theorems on field operations we can construct the complete subsection of  ``addition table'' and ``multiplication table'' expressible using numbers less than $10$. (Recall that $6+5=11$ is not a theorem because even though $6+5$ is well-defined, ``$11$'' does not name a number and so the statement itself makes no sense unless $11$ is first given a definition.)

\item We have general inequality theorems like transitivity, and theorems of the form $m<n$ for each $0\le m<n\le10$.
\end{itemize}

This listing motivates the following definition:

\begin{definition}
A {\em basic fact} is a theorem of the form $x\in\bbbn$, $x\in\bbbn_0$, $x=y+z$, $x=y\cdot z$, or $x<y$ where $x,y,z$ are each number terms selected from $0,\dots,10$, which asserts a true statement about integers $x,y,z$.
\end{definition}

\begin{theorem}\label{th:basic}
There is an integer $N$ such that every basic fact is provable in less than $N$ steps.
\end{theorem}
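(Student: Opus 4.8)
The plan is to exploit the finiteness of the collection of basic facts. A basic fact has one of five syntactic shapes ($x\in\bbbn$, $x\in\bbbn_0$, $x=y+z$, $x=y\cdot z$, $x<y$) with each of $x,y,z$ ranging over the eleven number terms $0,1,\dots,10$, so there are fewer than $5\cdot 11^3$ candidate statements and hence only finitely many basic facts. It therefore suffices to show that \emph{each} basic fact is provable in \emph{some} finite number of steps, and then to take $N$ to be one more than the maximum of these finitely many proof lengths.

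First I would check, by a short case analysis, that every basic fact is indeed a theorem. The facts $x\in\bbbn$ (for $x\in\{1,\dots,10\}$), $x\in\bbbn_0$ (for $x\in\{0,\dots,10\}$), and $x<y$ (for $0\le x<y\le 10$) are listed above as theorems of the database, so nothing more is needed. For $x=y+z$: since the fact is true and $x$ is a number term, $y+z\le 10$; if $y,z\ge 1$ then, after at most one use of commutativity of $+$ to put the summands in the order $n\le m$ assumed by the database's addition table, together with transitivity of equality, we obtain $x=\langle y+z\rangle=y+z$; if $y=0$ (resp.\ $z=0$) we instead use $a+0=a$ with commutativity and the symmetry and transitivity of $=$. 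The case $x=y\cdot z$ is handled the same way, using the database's multiplication table for $1<n\le m$, the identity $a\cdot 1=a$ when a factor is $1$, and the field-axiom fact $a\cdot 0=0$ when a factor is $0$; each of these reductions takes a bounded number of steps.

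Given that every basic fact has a proof, the theorem is immediate: the set of basic facts is finite, so for each one we may fix a proof of length $\ell$, the quantity $M=\max\ell$ over this finite set exists, and $N=M+1$ works. There is no genuinely hard step; the whole content lies in the observation that restricting the arguments to $0,\dots,10$ makes the set of basic facts finite. The only point requiring a little care is the case analysis of the previous paragraph, ensuring that a basic fact stated in a non-canonical argument order, or involving $0$ or $1$, is still provable rather than just the instances appearing verbatim as database theorems — but each such instance reduces to a database theorem in a uniformly bounded number of steps, so this costs nothing.
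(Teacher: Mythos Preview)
Your argument is correct and is essentially the paper's approach: the paper's entire proof is the single line ``Immediate since there are only finitely many basic facts,'' relying on the preceding bullet list for provability of each instance. Your expanded case analysis (handling commuted arguments, $0$ and $1$ as special summands/factors, and symmetry of equality) just spells out what the paper leaves implicit.
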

\begin{proof}
Immediate since there are only finitely many basic facts.
\qed\end{proof}

It is not relevant for the analysis how large $N$ is, but by explicitly enumerating such facts one can show that $N\le10$, or $N\le3$ if closure steps $x\in\bbbn_0,\bbbr,\bbbc$ are ignored.

We begin by defining the concept of ``numeral''.

\begin{definition}\label{df:dec}
A {\em numeral} is a term of the language of {\normalfont\texttt{set.mm}} defined recursively by the following rules:
\begin{itemize}
\item The terms $0,1,2,3$ are numerals.

\item If $n$ is a nonzero numeral and $a\in\{0,1,2,3\}$, then the term $(4\cdot n)+a$ is a numeral.
\end{itemize}
\end{definition}

\subsubsection{Why Quaternary?} It is easily seen that this definition enumerates not base-$10$ integer representations but rather base-$4$ representation. It is of course necessary to commit to a base to work in for practical purposes, and the most logical decision for a system such as \texttt{set.mm} which prioritizes abstract reasoning over numerical calculation is base $10$. The reason this choice was rejected is because the ``multiplication table'' for base $10$ would require many more basic facts like $7\cdot8=10\cdot 5+6$, while the multiplication table of base $4$ requires only numbers as large as $9$, which fits inside our available collection of basic facts. Furthermore, within this constraint a large base allows for shorter representations, which directly translates to shorter proofs of closure properties and other algorithms (in addition to increasing readability). The specific choice of $4=2^2$ also works well with algorithms like exponentiation by squaring, used in the calculation of $x^k\bmod n$ in primality tests (see Section~\ref{sec:results}).

\begin{remark}\label{rem:meta}
It is important to recognize that ``$n$ is a numeral'' is not a statement of the object language, but rather of the metalanguage describing the actual structure of terms. Furthermore, these are terms for ``concrete'' integers, not variables over them, and there are even valid terms for nonnegative integers that are not equal to any numeral, such as ${\rm if}({\rm CH},1,0)$ (where ${\rm CH}$ is the Continuum Hypothesis or any other independent statement), which is provably a member of $\bbbn_0$ even though neither $\vdash{\rm if}({\rm CH},1,0)=0$ nor $\vdash{\rm if}({\rm CH},1,0)=1$ are theorems. However, we will show that these pathologies do not occur in the evaluation of multiplication, addition, and ordering on other numerals.
\end{remark}

There are three additional methods that are employed to shorten decimal representations, by adding clauses to Definition~\ref{df:dec}. We will call these ``extended numerals''.
\begin{itemize}
\item We can include the numbers $4,5,6,7,8,9,10$ as numerals. Although it is disruptive to some of the algorithms to allow these in the lower digits (e.g. considering $4\cdot 3+7$ to be a numeral), it is easy to convert such ``non-standard'' digits in the most significant place, via the (object language) theorems
$$\inference{dec4}{}{\vdash (4\cdot 1)+0=4},\quad\dots\quad\inference{dec10}{}{\vdash (4\cdot 2)+2=10}.$$
\item We can drop $0$ when it occurs in a numeral; this amounts to adding the rule ``if $n$ is a numeral then $4\cdot n$ is a numeral'' to the definition of a numeral. The conversion from this form to the usual form is provided by the theorem
$$\inference{dec0u}{\vdash n\in\bbbn_0}{\vdash 4n+0=4n}.$$
\item We can define a function $(x:y)=4x+y$ and add the rule ``if $x$ is a numeral and $y\in\{0,1,2,3\}$ then $(x:y)$ is a numeral''. Since multiplication and addition have explicit grouping in \texttt{set.mm}, this halves the number of parentheses and improves readability, from $((4\cdot ((4\cdot 3)+2))+1)$ to $((3:2):1)$. The conversion for this form is provided by the theorem
$$\inference{decfv}{\vdash x\in\bbbn_0\quad \vdash y\in\bbbn_0}{\vdash 4x+y=(x:y)}.$$
\end{itemize}

In remark~\ref{rem:meta} we observed that not all nonnegative integers are numerals, but we can show that the converse is true, so that in the above theorems, we can use the antecedent $n\in\bbbn_0$ instead of ``$n$ is a numeral'' which is not possible since this is not a statement of the object language. This weaker notion is sufficient to assure that $n$ has all the general properties we can expect of numerals, like $n+0=n$ or $n\ge0$, which is what we need for most of the theorems on numerals.

\begin{theorem}
If $n$ is a numeral, then $\vdash n\in\bbbn_0$.
\end{theorem}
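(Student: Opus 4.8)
The plan is to prove this as a metatheorem by structural induction on the recursive definition of a numeral (Definition~\ref{df:dec}). As emphasized in Remark~\ref{rem:meta}, the predicate ``$n$ is a numeral'' lives in the metalanguage, so the induction itself is carried out in the metatheory; what it produces is, for each numeral term $t$, an object-language proof of $\vdash t\in\bbbn_0$, and the recursion in the definition is mirrored step for step by the recursion in the generated proof, so that the proof length is proportional to the length of $t$.

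For the base case I would simply observe that the terms $0,1,2,3$ are numerals and that $0\in\bbbn_0$, $1\in\bbbn_0$, $2\in\bbbn_0$, and $3\in\bbbn_0$ are all basic facts, hence provable by Theorem~\ref{th:basic}.

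For the inductive step, suppose $n$ is a nonzero numeral and $a\in\{0,1,2,3\}$, so that $t=(4\cdot n)+a$. By the induction hypothesis there is a proof of $\vdash n\in\bbbn_0$. I would then combine the basic fact $4\in\bbbn_0$ with closure of $\bbbn_0$ under multiplication to obtain $\vdash 4\cdot n\in\bbbn_0$, and combine this in turn with the basic fact $a\in\bbbn_0$ and closure of $\bbbn_0$ under addition to conclude $\vdash (4\cdot n)+a\in\bbbn_0$. In practice these two closure steps would be packaged once and for all into a deduction lemma $\vdash n\in\bbbn_0\to(4\cdot n)+a\in\bbbn_0$ (one instance for each of the four values of $a$), so that the inductive step reduces to a single application of modus ponens to the induction hypothesis.

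The three extended-numeral clauses are handled the same way: $4,\dots,10$ are basic facts; the term $4\cdot n$ follows from $4\in\bbbn_0$, closure under multiplication, and the induction hypothesis; and $(x:y)$ is definitionally $4x+y$ and hence collapses to a case already treated. I do not expect a genuine obstacle here -- the one point that needs care is recognizing that the statement is a schema, not a single object-language theorem, precisely because its hypothesis is not expressible in the object language; everything else is routine closure bookkeeping.
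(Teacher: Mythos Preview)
Your proposal is correct and follows essentially the same approach as the paper: structural induction on Definition~\ref{df:dec}, with the base case handled by basic facts and the inductive step by a closure lemma for $4m+a$. The only cosmetic difference is that the paper packages the inductive step as a single inference \textsf{decclc} with two premises $\vdash m\in\bbbn_0$ and $\vdash a\in\bbbn_0$, rather than four $a$-specialized implications, and the paper does not separately treat the extended-numeral clauses in this theorem.
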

\begin{proof}
By induction. If $n\in\{0,1,2,3\}$, then $n\in\bbbn_0$ is a basic fact. Otherwise, $n=4m+a$ for some numeral $m$ and $a\in\{0,1,2,3\}$, and by induction we can prove $m\in\bbbn_0$ and $a\in\bbbn_0$ is a basic fact. Then the result follows from application of the theorem
$$\inference{decclc}{\vdash m\in\bbbn_0\quad \vdash a\in\bbbn_0}{\vdash 4m+a\in\bbbn_0}.$$
\qed\end{proof}

\begin{theorem}\label{th:nncl}
If $n$ is a nonzero numeral, then $\vdash n\in\bbbn$.
\end{theorem}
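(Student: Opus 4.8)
The plan is to follow the proof of the preceding theorem almost verbatim, by induction on the recursive structure given in Definition~\ref{df:dec}, the only change being that we now carry the stronger conclusion $n\in\bbbn$ along the branch of the recursion that produces nonzero numerals.

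For the base case, a \emph{nonzero} numeral among $0,1,2,3$ must be one of $1,2,3$, and for each of these $n\in\bbbn$ is a basic fact, hence provable in at most $N$ steps by Theorem~\ref{th:basic}. For the inductive step, a nonzero numeral that is not one of $1,2,3$ has the form $n=(4\cdot m)+a$, and by the shape of the production rule this arises precisely when $m$ is itself a nonzero numeral and $a\in\{0,1,2,3\}$; so the induction hypothesis applies to $m$ and yields $\vdash m\in\bbbn$, while $\vdash a\in\bbbn_0$ is a basic fact. We then finish with one application of a closure theorem analogous to \textsf{decclc} but landing in $\bbbn$,
$$\inference{decnncl}{\vdash m\in\bbbn\quad \vdash a\in\bbbn_0}{\vdash (4\cdot m)+a\in\bbbn},$$
which is established once and for all in the object language from $4\in\bbbn$, closure of $\bbbn$ under multiplication, and the fact that $\bbbn+\bbbn_0\subseteq\bbbn$.

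I do not expect any real obstacle here. The one point worth care is the role of the ``nonzero'' hypothesis: it is exactly what rules out the false instance $0\in\bbbn$ in the base case, and, crucially, it is supplied for the sub-numeral $m$ automatically by the form of the production rule in Definition~\ref{df:dec}, so the induction goes through with no extra bookkeeping — and one never needs to separately check that $(4\cdot m)+a$ is itself nonzero, since $\vdash m\in\bbbn$ already forces $(4\cdot m)+a\in\bbbn$. If the statement is also wanted for the extended numerals $4,\dots,10$, $4n$, and $(x:y)$, each added production is absorbed by a single further application of the relevant conversion theorem (\textsf{dec4}--\textsf{dec10}, \textsf{dec0u}, or \textsf{decfv}) together with the lemma above.
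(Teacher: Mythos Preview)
Your argument is correct and follows the same induction on the numeral structure as the paper. The only difference is in how the inductive step is packaged: you invoke a single closure lemma
\[
\inference{}{\vdash m\in\bbbn\quad \vdash a\in\bbbn_0}{\vdash (4\cdot m)+a\in\bbbn},
\]
uniformly for all $a\in\{0,1,2,3\}$, whereas the paper splits on whether $a=0$ and uses two separate object-language theorems, \textsf{decnncl2} ($m\in\bbbn\Rightarrow 4m+0\in\bbbn$) and \textsf{decnnclc} ($m\in\bbbn_0,\ a\in\bbbn\Rightarrow 4m+a\in\bbbn$). Your single lemma is perfectly sound (from $4\in\bbbn$, closure of $\bbbn$ under multiplication, and $\bbbn+\bbbn_0\subseteq\bbbn$), and it makes the induction marginally cleaner since the hypothesis $m\in\bbbn$ from the inductive call is used directly rather than weakened to $m\in\bbbn_0$. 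The paper's decomposition, on the other hand, reuses \textsf{decnnclc} in a form that does not require $m$ to be nonzero, which can be convenient elsewhere; but for this particular theorem either packaging works and the overall proof shape is the same. (Minor caveat: the label \textsf{decnncl} is already used in the paper's implementation for the extended-numeral case $4\cdot n$ with no added digit, so your lemma would need a different name.)
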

\begin{proof}
By induction; the theorems involved are
$$\inference{decnncl2}{\vdash m\in\bbbn}{\vdash 4m+0\in\bbbn}\qquad \inference{decnnclc}{\vdash m\in\bbbn_0\quad \vdash a\in\bbbn}{\vdash 4m+a\in\bbbn}.$$
If $n\in\{1,2,3\}$, then $n\in\bbbn$ is a basic fact. Otherwise, $n=4m+a$ for some nonzero numeral $m$ and $a\in\{0,1,2,3\}$, and by induction $m\in\bbbn$. If $a=0$, then by \textsf{decnncl2} $n\in\bbbn$; otherwise $a\in\bbbn$ is a basic fact and $n\in\bbbn$ follows from \textsf{decnnclc}.
\qed\end{proof}

\begin{theorem}\label{th:eq}
The numeral representation is unique, in the sense that if $m=n$ as integers, then $m$ and $n$ are identical terms.
\end{theorem}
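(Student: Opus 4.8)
The plan is to argue by strong induction on the common integer value $v=m=n$, the crucial observation being that the prohibition on leading zeros in Definition~\ref{df:dec} creates a clean dichotomy between the two clauses of the definition. A numeral that is one of the base terms $0,1,2,3$ has value at most $3$, whereas a compound numeral $4n'+a$ has value at least $4$: by hypothesis $n'$ is a \emph{nonzero} numeral, so $n'\in\bbbn$ by Theorem~\ref{th:nncl} and hence $4n'+a\ge 4\cdot 1+0=4$. Thus from $v$ alone one can tell whether the outermost constructor is a base case ($v<4$) or compound ($v\ge 4$), and the same alternative is forced on $m$ and $n$ at once.

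First I would handle the base case $v<4$. By the dichotomy neither $m$ nor $n$ can be compound, so both lie among the terms $0,1,2,3$; since these four terms have the four distinct values $0,1,2,3$, each of $m$ and $n$ equals the unique base term with value $v$, and they are therefore the same term.

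For the inductive step $v\ge 4$, neither $m$ nor $n$ is a base term, so we may write $m=4m'+a$ and $n=4n'+b$ with $m',n'$ nonzero numerals and $a,b\in\{0,1,2,3\}$. Reducing $4m'+a=v=4n'+b$ modulo $4$ and invoking uniqueness of the quotient and remainder on division by $4$ yields $a=b$ and that $m',n'$ share the common value $\lfloor v/4\rfloor$; here $a=b$ already gives that $a$ and $b$ are the same term (they are base terms of equal value, as in the previous paragraph), and $1\le\lfloor v/4\rfloor<v$. Applying the induction hypothesis to the numerals $m'$ and $n'$ at value $\lfloor v/4\rfloor$ gives that they too are the same term, whence $m=4m'+a$ and $n=4n'+b$ are identical, completing the induction.

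The only step carrying real content is the dichotomy of the first paragraph; everything after it is just uniqueness of base-$4$ digits, i.e.\ of integer division with remainder. This is exactly where the exclusion of leading zeros earns its keep: without it, $0$ and $4\cdot 0+0$ would be distinct numeral terms of equal value and the statement would fail. As with the preceding results on numerals, this is a metatheorem about the shape of terms, proved by induction in the metalanguage; and since the ``extended numeral'' clauses deliberately introduce redundant representations, they are (correctly) outside the scope of the statement.
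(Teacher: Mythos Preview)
Your argument is correct and is essentially the same as the paper's: the paper dispatches the theorem in a single line by appeal to the uniqueness of base-$4$ representation, and your induction is exactly a careful unpacking of that fact, with the helpful explicit observation that the no-leading-zero clause is what makes the dichotomy between base and compound numerals work.
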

\begin{proof}
Follows from the uniqueness of base-$4$ representation of integers.
\qed\end{proof}

\begin{remark}\label{rem:eeq} Note that each of the extended representations of numerals invalidate this theorem, e.g. $(4\cdot2)+0=8=4\cdot2=(2:0)$ would all be distinct representations of the same integer value whose standard form is $(4\cdot2)+0$. Nevertheless, there is still a weaker form of uniqueness which is preserved. If $m,n$ are extended numerals and $m=n$ as integers, then $\vdash m=n$ is provable; this follows from the respective ``conversion'' theorems for each extension together with the equality theorems for addition and multiplication ($a=b,c=d\vdash a+b=c+d,\ a\cdot b=c\cdot d$).
\end{remark}

The converse of this, $\vdash m=n\implies m=n$, follows from soundness of ZFC from our interpretation of terms in the object logic as integers with the usual operations in the metalogic. This is why we will often not distinguish between equality in the metalogic and the object logic, because they coincide with each other and with identity as terms from Theorem~\ref{th:eq}.

\begin{theorem}\label{th:dec0}
If $n$ is a numeral then $\vdash 4a+b=n$ for some numeral $a$ (not necessarily nonzero) and some $b\in\{0,1,2,3\}$.
\end{theorem}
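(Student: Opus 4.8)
The plan is a case analysis on the top-level syntactic structure of $n$, following the two clauses of Definition~\ref{df:dec}; no induction is needed, since we only have to expose the most significant ``digit'' of $n$ and leave the rest untouched.

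First I would handle the base case, where $n$ is one of the terms $0,1,2,3$. Here $n$ is not literally of the form $4a+b$, so instead I take the witness $a$ to be the numeral $0$ and $b$ to be $n$ itself (which lies in $\{0,1,2,3\}$ by hypothesis). It then remains to derive $\vdash 4\cdot 0+n=n$, and I would obtain this from the generic field theorems already in the database: $\vdash 4\cdot 0=0$ (an instance of the generic identity $x\cdot 0=0$), hence $\vdash 4\cdot 0+n=0+n$ by the congruence rule for addition, and $\vdash 0+n=n$ from $a+0=a$ together with commutativity of addition; transitivity of equality then closes the case.

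In the recursive case $n$ is $(4\cdot m)+a$ for some \emph{nonzero} numeral $m$ and some $a\in\{0,1,2,3\}$, so $n$ is already in exactly the required shape: I take the theorem's witness to be the numeral $m$ for ``$a$'' and the digit $a$ for ``$b$'', and the desired statement $\vdash 4m+a=n$ is simply reflexivity $\vdash n=n$. Since every numeral falls into exactly one of these two cases, the theorem follows. If one also wants the statement for the extended numerals of Remark~\ref{rem:eeq}, three further cases appear, but each is immediate from the relevant conversion theorem --- \textsf{dec4} through \textsf{dec10} for a non-standard leading digit, \textsf{dec0u} for a dropped trailing $0$, and \textsf{decfv} for the colon form --- each of which already has the form $\vdash 4a+b=n$ once its side conditions ($m\in\bbbn_0$, $a\in\bbbn_0$) are discharged as basic facts or via the earlier closure theorem.

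The only slightly delicate point, and the reason the statement explicitly permits $a=0$, is the base case: for $n\in\{1,2,3\}$ the term $4\cdot 0+n$ is not itself a numeral, so the reduction to $n$ cannot be routed through the numeral-specific addition and multiplication facts and must instead go through the generic identities $x\cdot 0=0$ and $0+x=x$. Beyond that, the proof is a direct reading-off of Definition~\ref{df:dec}, so I expect no real obstacle.
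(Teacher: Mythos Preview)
Your proposal is correct and follows essentially the same approach as the paper: a two-case split on the clauses of Definition~\ref{df:dec}, with reflexivity in the recursive case and the identity $4\cdot 0+n=n$ in the base case. The only cosmetic difference is that the paper invokes a single pre-packaged theorem \textsf{dec0h} (from the hypothesis $n\in\bbbn_0$) for that base-case identity, whereas you chain the generic field laws $x\cdot 0=0$ and $0+x=x$ by hand.
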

\begin{proof}
If $n\in\{0,1,2,3\}$, then we can use the theorem
$$\inference{dec0h}{\vdash a\in\bbbn_0}{\vdash 4\cdot 0+a=a},$$
since $n\in\bbbn_0$ is a basic fact. Otherwise, $n=4a+b$ for some nonzero decimal $a$ and $b\in\{0,1,2,3\}$, and the goal statement is just $\vdash 4a+b=4a+b$.
\qed\end{proof}

\begin{theorem}\label{th:lt}
If $m<n$ are numerals, then $\vdash m<n$.
\end{theorem}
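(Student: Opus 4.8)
The plan is to induct on the structure of the larger numeral $n$ --- equivalently, on the number of base-$4$ digits of $n$ --- following the recursion of Definition~\ref{df:dec}, reducing the goal in each case to a fixed stock of object-language ``digit-monotonicity'' theorems together with the basic facts. In the base case $n\in\{0,1,2,3\}$, the hypothesis $m<n$ forces $m\in\{0,1,2\}$, so both $m$ and $n$ are single-digit numerals and $\vdash m<n$ is a basic fact, hence provable (Theorem~\ref{th:basic}).

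For the inductive step, write $n=4b+d$ with $b$ a nonzero numeral and $d\in\{0,1,2,3\}$, and split on the shape of $m$. If $m\in\{0,1,2,3\}$, then $m$ is shorter than $n$, and I would use the chain $m<4\le 4\cdot 1\le 4b\le 4b+d$, which needs only the basic fact $m<4$, the bound $1\le b$ coming from $b\in\bbbn$ (Theorem~\ref{th:nncl}), $d\in\bbbn_0$, and general monotonicity of $+$ and $\cdot$ on $\bbbr$ --- packaged as one ``small-versus-multidigit'' theorem (alternatively, rewrite $m$ as $4\cdot 0+m$ via \textsf{dec0h} and treat it with the next subcase, using the general fact $0<b$). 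Otherwise $m=4a+c$ with $a$ a nonzero numeral and $c\in\{0,1,2,3\}$; here I compare the leading parts $a$ and $b$ as integers. The case $a>b$ is impossible, since it would give $m\ge 4a\ge 4(b+1)=4b+4>n$. If $a<b$, then $b$ is a proper subterm of $n$, so the induction hypothesis gives $\vdash a<b$, and $\vdash 4a+c<4b+d$ follows from a ``carry-absorbing'' theorem whose content is the inequality $4a+c\le 4a+3<4(a+1)\le 4b\le 4b+d$, valid precisely because $c<4$. If $a=b$, then by uniqueness (Theorem~\ref{th:eq}) $a$ and $b$ are the same term, $m<n$ reduces to the basic fact $c<d$, and $\vdash 4b+c<4b+d$ follows from order-preservation of addition applied with the common summand $4b\in\bbbr$.

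The arithmetic here is genuinely trivial; the care --- and what the supporting \textsf{dec}-theorems must capture exactly --- lies in the fact that numeric comparison of standard-form numerals is \emph{lexicographic} on the base-$4$ digit strings, so a strict inequality in the leading parts dominates whatever the trailing digits are (this is where $c<4$ enters), and in checking that the recursion bottoms out correctly when $m$ and $n$ have different lengths. I therefore expect the main obstacle to be not the induction itself but assembling the right family of object-language monotonicity lemmas --- in particular the one that discards the low-order digit once the high-order parts are known to be strictly ordered --- rather than any mathematical subtlety.
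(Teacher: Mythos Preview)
Your proposal is correct and matches the paper's proof essentially step for step: the same three-way case split (single-digit $m$ against multidigit $n$; equal leading parts; strictly ordered leading parts), the same appeal to Theorem~\ref{th:eq} in the equal-leading-parts case, and the same three supporting object-language lemmas (named \textsf{declti}, \textsf{declt}, \textsf{decltc} in the paper). The only cosmetic difference is that the paper phrases the induction as being on $m$ rather than on $n$; since in the recursive call both leading parts shrink, either choice of induction variable works.
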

\begin{proof}
By induction on $m$; the relevant theorems are
$$\inference{declti}{\vdash a\in\bbbn\quad \vdash b,c\in\bbbn_0\quad \vdash c<4}{\vdash c<4a+b},$$
$$\inference{declt}{\vdash a,b\in\bbbn_0\quad \vdash c\in\bbbn\quad \vdash b<c}{\vdash 4a+b<4a+c},$$
$$\inference{decltc}{\vdash a,b,c,d\in\bbbn_0\quad \vdash b<4\quad \vdash a<c}{\vdash 4a+b<4c+d}.$$
If $m,n\in\{0,1,2,3\}$, then $m<n$ is a basic fact. Otherwise if $m\in\{0,1,2,3\}$ and $n=4a+b$, then since $a$ is nonzero $a\in\bbbn$ by Theorem~\ref{th:nncl}, and $m<4$ is a basic fact, so \textsf{declti} applies to give $\vdash m<n$. If $m=4a+b$, then $m\ge 4$ so $n=4c+d$ (because $m<n$ implies $n\notin\{0,1,2,3\}$) for nonzero numerals $a,c$ and $b,d\in\{0,1,2,3\}$. If $a=c$, then by Theorem~\ref{th:eq}, $a$ and $c$ are identical, so (working in the metalogic) $4a+b<4a+d\to b<d$, and since $b,d\in\{0,1,2,3\}$ this is a basic fact; thus \textsf{declt} applies and $\vdash m<n$. Again working in the metalogic, if $a>c$ then $4a+b\ge4a\ge4c+4>4c+d$ in contradiction to the assumption $m<n$, so in the other case $a<c$ and by the induction hypothesis $\vdash a<c$; and $b<4$ is a basic fact, hence \textsf{decltc} applies.
\qed\end{proof}

As an example of Theorem~\ref{th:lt}, we know that $\vdash 3<4\cdot 3+1$ and $\vdash 4\cdot 2+0<4\cdot 2+1$ are theorems of \texttt{set.mm} because $3<13$ and $8<9$, respectively (and the numeral representations of these numbers are $[3]=3$, $[13]=4\cdot3+1$ and so on).

Next we show how to do successors, addition, and multiplication.
\begin{definition}\label{df:eval}
As a variant of Definition~\ref{df:evaln}, we use the notation $[x]$ to denote the unique numeral corresponding to the expression $x$. Thus for example $[6+5]=(4\cdot 2)+3$.
\end{definition}
\begin{remark}\label{rem:eval} Note that for $0\le x\le 10$, the statement $[x]=\langle x\rangle$ is either an identity or one of \textsf{dec4}, \textsf{dec5}, \dots, \textsf{dec10}, and so is provable in one step.
\end{remark}

\begin{theorem}\label{th:suc}
If $n$ is a numeral and $\vdash n=n'$, then $\vdash [n+1]=n'+1$.
\end{theorem}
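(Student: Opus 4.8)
The plan is to prove this by structural induction on the numeral $n$, following the same pattern as the proofs of Theorems~\ref{th:nncl} and~\ref{th:lt}. First I would reduce to the case $n'=n$: from the hypothesis $\vdash n=n'$ and the object-language equality theorem for addition (with $\vdash 1=1$) we obtain $\vdash n+1=n'+1$, so by transitivity of equality it suffices to establish the claim $\vdash [n+1]=n+1$ for every numeral $n$. This $n'$ bookkeeping is exactly what makes the lemma usable recursively: if some expression $A$ has already been evaluated to $\vdash A=n$, then applying the lemma with $n':=A$ gives $\vdash [n+1]=A+1$, and since $[n+1]$ and $[A+1]$ are the same term by Theorem~\ref{th:eq}, this is the evaluation of $A+1$.

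For the base case $n\in\{0,1,2,3\}$, we have $n+1\in\{1,2,3,4\}$, so by Remark~\ref{rem:eval} the statement $\vdash [n+1]=\langle n+1\rangle$ is provable in one step (an identity when $n\le2$, and \textsf{dec4} when $n=3$), and $\langle n+1\rangle=n+1$ is a basic fact; chaining the two gives $\vdash [n+1]=n+1$.

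For the inductive step, write $n=4m+a$ with $m$ a nonzero numeral and $a\in\{0,1,2,3\}$; by Theorem~\ref{th:nncl} we have $\vdash m\in\bbbn$ (hence $\vdash m\in\bbbn_0$), and $\vdash a\in\bbbn_0$ is a basic fact. If $a\le2$ there is no carry: $[n+1]=4m+\langle a+1\rangle$, and using an object-language ``successor-with-no-carry'' theorem of the shape $\vdash m\in\bbbn_0,\ \vdash a+1=b\Rightarrow\vdash (4m+a)+1=4m+b$ (an immediate consequence of associativity, $x+(y+1)=(x+y)+1$) together with the basic fact $a+1=\langle a+1\rangle$, we conclude. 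If $a=3$ there is a carry: $n+1=4(m+1)$, and since $m$ is a nonzero numeral its value is at least $1$, so $m+1$ has value at least $2$ and $[m+1]$ is again a nonzero numeral; hence $[n+1]=4[m+1]+0$ is a legitimate numeral by Definition~\ref{df:dec}. Applying the induction hypothesis to $m$ (with $m':=m$) gives $\vdash [m+1]=m+1$, whence $\vdash 4[m+1]=4(m+1)$ by the multiplication equality theorem, and then a short chain using $x+0=x$, distributivity $4(m+1)=4m+4\cdot1$, the field fact $4\cdot1=4$, the basic fact $4=3+1$, and associativity yields $\vdash 4[m+1]+0=(4m+3)+1$, i.e.\ $\vdash [n+1]=n+1$.

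I expect the carry case $a=3$ to be the main obstacle, for two reasons: it is the only branch in which the recursion actually descends, so the induction hypothesis must be invoked, and one must first verify that $[m+1]$ is still a \emph{nonzero} numeral so that the recursion terminates and yields a well-formed numeral; and it requires re-expressing the value $4(m+1)$ in the canonical shape $4[m+1]+0$ by a chain of associativity and distributivity rewrites rather than a single table lookup. In the actual \texttt{set.mm} development this chain is presumably packaged into one or two dedicated object-language lemmas (a ``\textsf{decsuc}'' for the no-carry case and a counterpart for the carry case), so that the metatheorem reduces to the case split plus one lemma application in each branch.
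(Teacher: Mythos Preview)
Your argument is correct and matches the paper's proof in structure: induction on the numeral, a no-carry branch and a carry branch, with the carry branch invoking the induction hypothesis on the high part and the whole thing packaged by the two lemmas you anticipate (the paper calls them \textsf{decsuc} and \textsf{decsucc2}). Two small bookkeeping differences are worth noting: the paper keeps $n'$ threaded through via the hypothesis $\vdash 4a+b=n$ in those lemmas rather than reducing to $n'=n$ up front, and its base case is $n\in\{0,1,2\}$ only, with $n=3$ absorbed into the inductive step by first invoking Theorem~\ref{th:dec0} to write $3=4\cdot0+3$ (so the carry branch handles it uniformly), whereas you dispatch $n=3$ directly in the base case via \textsf{dec4}.
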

\begin{proof}
By induction on $n$; the relevant theorems are
$$\inference{decsuc}{\vdash a,b\in\bbbn_0\quad \vdash c=(b+1)\quad \vdash 4a+b=n}{\vdash 4a+c=n+1},$$
$$\inference{decsucc2}{\vdash a\in\bbbn_0\quad \vdash b=(a+1)\quad \vdash 4a+3=n}{\vdash 4b+0=n+1}.$$
If $n\in\{0,1,2\}$, then $[n+1]\in\{1,2,3\}$ and $[n+1]=n+1$ is a basic fact, and $\vdash n+1=n'+1$. Otherwise, by Theorem~\ref{th:dec0} $n=4a+b$ for some (not necessarily nonzero) numeral $a$ and $b\in\{0,1,2,3\}$. If $b\in\{0,1,2\}$, then $[n+1]=4a+[b+1]$, and so the assumptions $[b+1]=b+1$, $4a+b=n'$ for \textsf{decsuc} are satisfied. Otherwise $b=3$ and $[n+1]=4[a+1]+0$, and by the induction hypothesis $\vdash[a+1]=a+1$ (using $n,n'\mapsto a$). Then $[a+1]=a+1$ and $4a+3=n'$ satisfy the assumptions of \textsf{decsucc2}. \qed\end{proof}

\begin{remark} The extra assumption $\vdash n=n'$ is not necessary (i.e. we could just have proven $\vdash [n+1]=n+1$) but makes it a little easier to work with extended numerals, because that way $n$ can be the standard numeral while $n'$ is the extended numeral, and the assumption is satisfied by remark~\ref{rem:eeq}.
\end{remark}

To give an example, if we were able to prove (using other theorems than discussed here) that $\vdash 4\cdot 1+1=6-1$, then Theorem~\ref{th:suc} says, taking $n=[5]=4\cdot 1+1$ and $n'=6-1$, that $\vdash 4\cdot 1+2=(6-1)+1$ is also provable.

\begin{theorem}\label{th:add}
If $m,n$ are numerals such that $\vdash m=m'$ and $\vdash n=n'$, then $\vdash [m+n]=m'+n'$.
\end{theorem}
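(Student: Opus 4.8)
The plan is to follow the template of Theorem~\ref{th:suc}. From the hypotheses $\vdash m=m'$, $\vdash n=n'$ and the equality theorem $a=b,c=d\vdash a+b=c+d$ one immediately gets $\vdash m+n=m'+n'$, so it suffices to prove the ``core'' statement $\vdash[m+n]=m+n$ for arbitrary numerals $m,n$; as in Theorem~\ref{th:suc} the primed forms are carried along only so that the result applies verbatim to extended numerals via Remark~\ref{rem:eeq}. I would prove the core statement by strong induction on the integer value $m+n$.

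In the base case $m,n\in\{0,1,2,3\}$, so that $m+n\in\{0,\dots,6\}$, and $\vdash[m+n]=m+n$ follows by transitivity from the one-step fact $[m+n]=\langle m+n\rangle$ (an identity, or one of \textsf{dec4},\textsf{dec5},\textsf{dec6}, by Remark~\ref{rem:eval}) together with the basic fact $\langle m+n\rangle=m+n$. In the inductive step at least one of $m,n$ is $\geq4$; using Theorem~\ref{th:dec0} (which invokes \textsf{dec0h} when one of the two is a single digit) write $m=4a+b$ and $n=4c+d$ for numerals $a,c$ (possibly $0$) and $b,d\in\{0,1,2,3\}$, so that $m+n=4(a+c)+(b+d)$ and, since $m+n\geq4$, $a+c<m+n$; the induction hypothesis then gives $\vdash[a+c]=a+c$. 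Now split on the carry. If $b+d<4$, then $a+c\geq1$ and hence $[m+n]=4\,[a+c]+[b+d]$ by uniqueness of the representation (Theorem~\ref{th:eq}); since $\vdash[b+d]=b+d$ is a basic fact, the claim follows from a purely field-arithmetic ``digit-addition'' theorem of the form
$$\inference{decadd}{\vdash a,b,c,d\in\bbbn_0\quad\vdash e=a+c\quad\vdash f=b+d}{\vdash(4a+b)+(4c+d)=4e+f}$$
instantiated with $e=[a+c]$ and $f=[b+d]$. If instead $b+d\geq4$, write $b+d=4+f$ with $f\in\{0,1,2\}$; then $[m+n]=4\,[a+c+1]+f$, and applying Theorem~\ref{th:suc} to the numeral $[a+c]$ with the just-derived $\vdash[a+c]=a+c$ as its hypothesis yields $\vdash[a+c+1]=(a+c)+1$. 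The claim then follows from the carry variant
$$\inference{decaddc}{\vdash a,b,c,d\in\bbbn_0\quad\vdash e=(a+c)+1\quad\vdash b+d=4+f}{\vdash(4a+b)+(4c+d)=4e+f}$$
whose last hypothesis $\vdash b+d=4+f$ is a basic fact because $b+d\leq6\leq10$.

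The one genuinely delicate point---more a matter of bookkeeping than of mathematics---is the carry case: the recursion on the high-order digits must be threaded back through the successor operation of Theorem~\ref{th:suc}, and one has to verify that the carry computation $b+d=4+f$ never escapes the finite stock of basic facts, which it does not since $b,d\leq3$ forces $b+d\leq6$. Termination of the induction rests on the strict inequality $a+c<m+n$, which holds whenever $m+n\geq1$; this is why $m,n\in\{0,1,2,3\}$ (and in particular the pair $m=n=0$) must be handled as an honest base case rather than being rewritten as $4\cdot0+m$, $4\cdot0+n$.
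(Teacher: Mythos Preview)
Your proof is correct and follows essentially the same approach as the paper: strong induction on $m+n$, promotion via Theorem~\ref{th:dec0}, a carry/no-carry split handled by \textsf{decadd} and \textsf{decaddc}, and an appeal to Theorem~\ref{th:suc} for the incremented high digit in the carry case. The only cosmetic differences are that the paper keeps the primed forms inside the rule applications (the hypotheses $4a+b=m$, $4c+d=n$ of \textsf{decadd}/\textsf{decaddc} are instantiated with $m',n'$ directly) rather than factoring them off at the start, and the paper's base case is the narrower $m+n\in\{0,1,2,3\}$. One small inaccuracy: $\vdash b+d=4+f$ is not itself a basic fact, since $b+d$ is a compound term; as in the paper you need the two basic facts $\langle b+d\rangle=b+d$ and $\langle b+d\rangle=4+f$ chained by transitivity.
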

\begin{proof}
By induction on $m+n$; the relevant theorems are
$$\inference{decadd}{\begin{array}{c}
  \vdash a,b,c,d\in\bbbn_0\quad \vdash 4a+b=m\quad \vdash 4c+d=n\\
  \vdash e=a+c\quad \vdash f=b+d\end{array}}{\vdash 4e+f=m+n},$$
$$\inference{decaddc}{\begin{array}{c}
  \vdash a,b,c,d,f\in\bbbn_0\quad \vdash 4a+b=m\quad \vdash 4c+d=n\\
  \vdash e=(a+c)+1\quad \vdash 4+f=b+d\end{array}}{\vdash 4e+f=m+n}.$$
If $m+n\in\{0,1,2,3\}$, then $[m+n]=m+n$ is a basic fact so $\vdash[m+n]=m'+n'$ follows from properties of equality. By Theorem~\ref{th:dec0} we can promote $m$ and $n$ to the form $m=4a+b$, $n=4c+d$ where $b,d<4$ and $a,c$ are numerals. Now if $b\,+\,d<4$, then $\vdash b\,+\,d<4$ follows from the basic facts $[b+d]=b+d$ and $[b+d]<4$, and so filling the assumptions with $4a+b=m'$, $4c+d=n'$, and $[b+d]=b+d$ and using the induction hypothesis to prove $[a+c]=a+c$, we can apply \textsf{decadd}. Otherwise, $b+d\ge4$, so $[b+d-4]\in\{0,1,2,3\}$ and we can use $4a+b=m'$, $4c+d=n'$, prove $4+[b+d-4]=b+d$ using the basic facts $\langle b+d\rangle=4+[b+d-4]$ and $\langle b+d\rangle=b+d$, and prove $[a+c+1]=[a+c]+1=(a+c)+1$ using first Theorem~\ref{th:suc} and then the induction hypothesis; this completes the assumptions of \textsf{decaddc}.
\qed\end{proof}

The next theorem works by double induction, so it is easier to split it into two parts.

\begin{theorem}\label{th:mul1}
If $m,n$ are numerals, $p\in\{0,1,2,3\}$, and $\vdash m=m',n=n'$, then $[mp+n]=m'p+n'$.
\end{theorem}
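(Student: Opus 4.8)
The plan is to induct on the numeral $m$, processing its base-$4$ digits from least to most significant in the manner of Horner's method; this is the inner of the two inductions alluded to above, with the digit $p$ held fixed, while the companion theorem will recurse on the digits of $n$ to assemble a general product. The hypothesis $p\in\{0,1,2,3\}$ is precisely what keeps every digit-level product $bp$ inside the range $0$–$9$ covered by the basic facts, and also bounds the carry that arises; this is the advantage of base $4$ discussed above.

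\emph{Base case.} Suppose $m\in\{0,1,2,3\}$. Then $mp\le 9$, so $\vdash mp=\langle mp\rangle$ holds (a basic fact when $m,p\ge 2$, and immediate from $a\cdot 0=0$, $a\cdot 1=a$ in the degenerate cases). Apply Theorem~\ref{th:add} with numeral $[mp]$ and associate $\langle mp\rangle$ (legitimate since $\vdash[mp]=\langle mp\rangle$ by Remark~\ref{rem:eval}) together with numeral $n$ and its associate $n'$; this gives $\vdash[mp+n]=\langle mp\rangle+n'$. Since $\vdash m=m'$ implies $\vdash mp=m'p$ and hence $\vdash\langle mp\rangle=m'p$, the equality theorem for addition yields $\vdash[mp+n]=m'p+n'$.

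\emph{Inductive step.} Suppose $m\notin\{0,1,2,3\}$. By Theorem~\ref{th:dec0} write $\vdash 4a+b=m$ with $a$ a numeral and $b\in\{0,1,2,3\}$ — here $a$ is necessarily nonzero, so $a<m$ as integers and the induction hypothesis applies to it — and similarly $\vdash 4c+d=n$ with $c$ a numeral and $d\in\{0,1,2,3\}$; from $\vdash m=m'$ and $\vdash n=n'$ we also obtain $\vdash 4a+b=m'$ and $\vdash 4c+d=n'$. The arithmetic identity is $mp+n=4(ap+c)+(bp+d)$, where $bp+d\le 3\cdot 3+3=12$. If $bp+d<4$ the sum is already a legal digit, the high part $ap+c$ is evaluated by the induction hypothesis applied to $a$, $p$, $c$, and the two halves are glued by a \textsf{decadd}-style theorem \textsf{decma}. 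If $bp+d\ge4$, write $\vdash bp+d=4e+f$ with $e\in\{1,2,3\}$ the carry and $f\in\{0,1,2,3\}$ — proved from the basic fact $\vdash bp=\langle bp\rangle$ followed by Theorem~\ref{th:add} to evaluate $\langle bp\rangle+d$, since that value can be $11$ or $12$ and so is not itself a basic fact — whence $mp+n=4(ap+c+e)+f$; one then forms the numeral $[c+e]$ by $e$ applications of Theorem~\ref{th:suc}, invokes the induction hypothesis on $a$, $p$, $[c+e]$, and reassembles via a \textsf{decaddc}-style theorem \textsf{decmac}. In both cases the extended associates $m'$, $n'$ are threaded through the hypotheses in place of the standard forms $4a+b$, $4c+d$, which is harmless by Remark~\ref{rem:eeq}.

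The step I expect to demand the most care is the carry analysis in the inductive step: one must verify that $bp+d$ is always at most $12$, so that the extracted carry $e$ is again one of $0,1,2,3$ and the recursion never escapes the numeral format; and one must be careful that $\langle bp\rangle+d$ may exceed $10$, so that ``$bp+d=4e+f$'' is a short derivation through Theorems~\ref{th:suc} and~\ref{th:add} rather than a single basic fact. The rest — well-foundedness of the recursion on $a$, and the substitution of $m'$, $n'$ for the standard decompositions — is routine.
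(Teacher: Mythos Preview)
Your argument is correct, and the overall induction scheme on $m$ matches the paper's, but your inductive step is organized differently. You imitate the addition theorem by splitting on whether $bp+d<4$ and invoking two separate rules, a hypothetical \textsf{decma} for the no-carry case and \textsf{decmac} for the carry case. The paper avoids this split: it uses only the single rule \textsf{decmac}, whose hypothesis $\vdash 4g+f=bp+d$ is satisfied for \emph{any} value of $bp+d$ by first invoking Theorem~\ref{th:dec0} to write $[bp+d]=4g+f$ (with $g$ possibly zero), and then Theorem~\ref{th:add} to get $4g+f=[bp]+d=bp+d$. The carry $g$ is thus absorbed uniformly into the recursive call $\vdash[ap+c+g]=ap+(c+g)$, which the paper discharges by one application of Theorem~\ref{th:add} to form $[c+g]$ followed by the induction hypothesis---rather than by $e$ iterations of the successor theorem as you suggest. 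Your version would work and your carry bound $bp+d\le12$ is exactly right, but the paper's route needs one inference rule instead of two and no case analysis; conversely, your split makes the no-carry path slightly shorter in practice since no vacuous $4\cdot 0+f$ promotion is needed.
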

\begin{proof}
By induction on $m$, using the theorem
$$\inference{decmac}{\begin{array}{c}
  \vdash a,b,c,d,p,f,g\in\bbbn_0\quad \vdash 4a+b=m\quad \vdash 4c+d=n\\
  \vdash e=ap+(c+g)\quad \vdash 4g+f=bp+d\end{array}}{\vdash 4e+f=mp+n}.$$
If $m\in\{0,1,2,3\}$, then $[mp]=\langle mp\rangle$ is provable by remark~\ref{rem:eval} and $\langle mp\rangle=mp$ is a basic fact, so $\vdash[mp+n]=[mp]+n=m'p+n'$ by Theorem~\ref{th:add}. Otherwise let $m=4a+b$, and write $n=4c+d$ and $[bp+d]=4g+f$ for some $c,d,f,g$ by Theorem~\ref{th:dec0}. Then the assumptions to \textsf{decmac} are satisfied by $4a+b=m'$, $4c+d=n'$, $[ap+c+g]=ap+[c+g]=ap+(c+g)$ by the induction hypothesis and Theorem~\ref{th:add}, and $4g+f=[bp+d]=[bp]+d=bp+d$ by Theorem~\ref{th:add}.
\qed\end{proof}

\begin{theorem}\label{th:ma}
If $m,n,p$ are numerals, and $\vdash m=m',n=n',p=p'$, then $[mp+n]=m'p'+n'$.
\end{theorem}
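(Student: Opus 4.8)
The plan is to induct on the multiplier numeral $p$, keeping $m$ and $n$ as parameters, so that the recursion walks down the quaternary digits of $p$ by Horner's method, in direct analogy with Theorem~\ref{th:mul1}, which recursed on $m$ for a fixed digit. I expect to need one new object-language multiply-accumulate theorem, of roughly the shape
$$\inference{decma}{\begin{array}{c}\vdash m,n,c,d,f,g\in\bbbn_0\quad \vdash 4c+d=p\\ \vdash e=mc+g\quad \vdash 4g+f=md+n\end{array}}{\vdash 4e+f=mp+n},$$
whose soundness is the one-line identity $mp+n=m(4c+d)+n=4(mc)+(md+n)=4(mc+g)+f$; note that no bound on the digit $d$ enters this identity, that restriction being needed only at the meta-level below.

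In the base case $p\in\{0,1,2,3\}$, which also absorbs the degenerate multiplier $p=0$, the term $p$ is literally a digit, so Theorem~\ref{th:mul1} applies directly and gives $\vdash[mp+n]=m'p+n'$; composing with $\vdash p=p'$ via the equality theorems yields $\vdash[mp+n]=m'p'+n'$.

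For the inductive step, $p$ is a numeral with $p\ge4$, so by Definition~\ref{df:dec} we may write $p=4q+r$ with $q$ a nonzero numeral and $r\in\{0,1,2,3\}$, and $\vdash 4q+r=p'$ follows by transitivity with $\vdash p=p'$. First I would apply Theorem~\ref{th:mul1} to the digit $r$ to get $s:=[mr+n]$ with $\vdash s=m'r+n'$, then Theorem~\ref{th:dec0} to normalize $s$ as $4g+h$ with $g$ a numeral and $h\in\{0,1,2,3\}$ (so $\vdash 4g+h=m'r+n'$). The induction hypothesis, applied with multiplier $q$ and parameters $m$, $g$ (taking the primed copies of $q$ and $g$ to be themselves), then gives $e:=[mq+g]$ with $\vdash e=m'q+g$. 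Feeding $\vdash 4q+r=p'$, $\vdash e=m'q+g$, and $\vdash 4g+h=m'r+n'$ into \textsf{decma} produces $\vdash 4e+h=m'p'+n'$. Finally $4e+h$ is the standard numeral for $mp+n$ by Theorem~\ref{th:eq} and Definition~\ref{df:dec}, since $q\ne0$ forces $mq+g$ to vanish only when $m=0$ and $n\in\{0,1,2,3\}$ -- a sub-case in which $[mp+n]=[n]=n$ and a single appeal to \textsf{dec0h} ($\vdash 4\cdot 0+n=n$) closes the argument, exactly as in the proof of Theorem~\ref{th:dec0}. Hence $\vdash[mp+n]=4e+h=m'p'+n'$.

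The hard part will be the carry bookkeeping: checking that splitting $[mr+n]$ as $4g+h$ and then forming $[mq+g]$ really threads the carry $g$ into the next quaternary position, i.e. that the assembled term $4\,[mq+g]+h$ both evaluates to $mp+n$ and is genuinely a standard numeral; it is exactly in confirming the latter that the degenerate vanishing-leading-digit case must be dispatched. Everything else is routine instantiation of \textsf{decma}, Theorem~\ref{th:mul1} (which already subsumes Theorem~\ref{th:add}), and Theorem~\ref{th:dec0}, with the extended-numeral bookkeeping handled via the equality theorems and Remark~\ref{rem:eeq} as in the earlier proofs.
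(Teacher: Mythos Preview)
Your proof is correct and follows the same overall scheme as the paper: induction on the numeral $p$, with the base case $p\in\{0,1,2,3\}$ discharged directly by Theorem~\ref{th:mul1}. The inductive step differs in a small but genuine way. The paper decomposes \emph{both} $p=4a+b$ and $n=4c+d$, invokes Theorem~\ref{th:mul1} only on the single-digit product-plus-digit $mb+d$ to extract carry $g$ and digit $f$, and then applies the induction hypothesis to $ma+(c+g)$ after first calling Theorem~\ref{th:add} on $c+g$; the object-language rule used is
\[
\inference{decma2c}{\begin{array}{c}
  \vdash a,b,c,d,m,f,g\in\bbbn_0\quad \vdash 4a+b=p\quad \vdash 4c+d=n\\
  \vdash e=ma+(c+g)\quad \vdash 4g+f=mb+d\end{array}}{\vdash 4e+f=mp+n}.
\]
You instead leave $n$ intact, let Theorem~\ref{th:mul1} absorb the full addend in $mr+n$, and recurse on $mq+g$; your proposed \textsf{decma} rule is correspondingly leaner (no decomposition of $n$, no separate call to Theorem~\ref{th:add}). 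Both decompositions are sound and yield the same $O(|p|\cdot\max(|m|,|n|))$ step count; the paper's version keeps the inner carry $g$ digit-sized, whereas yours lets $g$ be a full numeral but compensates by dropping two hypotheses from the rule. Your handling of the degenerate leading-zero case is also more explicit than the paper's, which silently relies on the same \textsf{dec0h} mechanism via Theorem~\ref{th:dec0}.
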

\begin{proof}
By induction on $p$, using the theorem
$$\inference{decma2c}{\begin{array}{c}
  \vdash a,b,c,d,m,f,g\in\bbbn_0\quad \vdash 4a+b=p\quad \vdash 4c+d=n\\
  \vdash e=ma+(c+g)\quad \vdash 4g+f=mb+d\end{array}}{\vdash 4e+f=mp+n}.$$
If $p\in\{0,1,2,3\}$, then $[mp+n]=m'p+n'=m'p'+n'$ by Theorem~\ref{th:mul1}.  Otherwise let $m=4a+b$, and write $n=4c+d$ and $[mb+d]=4g+f$ for some $c,d,f,g$  by Theorem~\ref{th:dec0}. Then the assumptions to \textsf{decma2c} are satisfied by $4a+b=p'$, $4c+d=n'$, $[ma+c+g]=ma+[c+g]=ma+(c+g)$ by the induction hypothesis and Theorem~\ref{th:add}, and $4g+f=[mb+d]=mb+d$ by Theorem~\ref{th:mul1}.
\qed\end{proof}

\begin{corollary}
If $m,n$ are numerals, and $\vdash m=m',n=n'$, then $[mn]=m'n'$.
\end{corollary}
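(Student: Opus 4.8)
The plan is to reduce the corollary directly to Theorem~\ref{th:ma} by specializing the parameter $n$ to a numeral whose value is $0$. Concretely, I would take $n=0$ and $n'=0$ (so the hypothesis $\vdash n=n'$ is the basic fact $0=0$), and apply Theorem~\ref{th:ma} with $p=n$ and $p'=n'$, obtaining $\vdash[mn+0]=m'n'+0$. What remains is purely a matter of discharging the trailing $+0$ on both sides.

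First I would observe that $[mn+0]=[mn]$ as numerals: since $mn+0=mn$ as integers, Theorem~\ref{th:eq} (uniqueness of numeral representation) identifies the two terms, so the object-language statement produced by Theorem~\ref{th:ma} is literally $\vdash[mn]=m'n'+0$. Next I would invoke the general field theorem $a+0=a$ (listed among the facts we take for granted, applied with $a=m'n'$) together with $\bbbc$-closure of $m'n'$ to get $\vdash m'n'+0=m'n'$, and then chain this with the previous equation using transitivity of equality to conclude $\vdash[mn]=m'n'$. A small bookkeeping point: applying $a+0=a$ needs $m'n'\in\bbbc$ (or $\bbbn_0$), which follows since $m',n'$ are extended numerals and hence provably in $\bbbn_0$ by the closure theorems, with multiplicative closure giving $m'n'\in\bbbn_0\subseteq\bbbc$.

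There is essentially no hard part here — the corollary is the $p\mapsto n$, $n\mapsto 0$ instance of Theorem~\ref{th:ma} up to a cosmetic $+0$. If anything, the only thing to be careful about is not conflating the two roles that the letter $n$ plays: in the corollary $n$ is the second factor, while in Theorem~\ref{th:ma} the symbol $n$ is the additive term that we are zeroing out, so the clean way to state the reduction is ``apply Theorem~\ref{th:ma} with its $m,n,p$ instantiated to our $m,0,n$ respectively.'' Once that substitution is made explicit, the proof is a one-liner modulo the $a+0=a$ rewrite.
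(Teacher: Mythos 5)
Your proposal is correct and matches the paper's own proof: both instantiate Theorem~\ref{th:ma} with the additive term set to $0$ to get $\vdash[mn]=m'n'+0$, then discharge the $+0$ via $a+0=a$ together with the closure fact $m'n'=mn\in\bbbn_0$. The paper only adds the remark that using \textsf{decmul1c}/\textsf{decmul2c} (the multiplication-only analogues of \textsf{decmac}/\textsf{decma2c}) is slightly more efficient than this reduction.
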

\begin{proof}
Theorem~\ref{th:ma} gives $[mn]=m'n'+0$, and $m'n'+0=m'n'$ because $m'n'=mn\in\bbbn_0$. (Slightly more efficient than this approach is an application of the theorems \textsf{decmul1c}, \textsf{decmul2c} which are essentially the same as \textsf{decmac}, \textsf{decma2c} without the addition component.)
\qed\end{proof}

\section{A Mathematica Implementation of the Decimal Arithmetic Algorithm}\label{sec:mma}

The purpose of the preceding section was not merely to prove that arithmetic operations are possible, which could be done just as easily using finite sums or unary representation. Rather, by proving the results contructively it is in effect a description of an algorithm for performing arithmetic calculations, and as such it is not difficult to implement on a computer. Due to its advanced pattern-matching capabilities, Mathematica was selected as the language of choice for the implementation.

In order to avoid Mathematica's automatic reduction of arithmetic expressions, we represent the Metamath formulas of our limited domain via the following correspondence:
\begin{itemize}
\item An integer $n$ is represented as itself
\item The term $a+b$ becomes $\mathtt{pl}[a,b]$
\item The term $a\cdot b$ becomes $\mathtt{tm}[a,b]$
\item $\vdash a=b$ becomes $\mathtt{eq}[a,b]$
\item $\vdash a<b$ becomes $\mathtt{lt}[a,b]$
\item $\vdash a\in\bbbn$ becomes $\mathtt{elN}[a]$
\item $\vdash a\in\bbbn_0$ becomes $\mathtt{elN0}[a]$
\item $\vdash a\in\bbbc$ becomes $\mathtt{elC}[a]$
\end{itemize}
These symbols have no evaluation semantics and so simply serve to store the shape of the target expression.

The output proof is stored as a tree of list expressions by the following correspondence:
\begin{quote} If the expression $e$ is obtained by applying the theorem $t$ to the list of expressions $e_1,\dots,e_k$ with proofs $p_1,\dots,p_k$, then the proof of $e$ is stored as the expression $p=\mathtt{\{\{}p_1,\dots,p_k\mathtt{\}},e,t\mathtt{\}}$.
\end{quote}

For example, the expression $4\cdot (4\cdot 1+3)+2=5\cdot 6$ is represented as \[\texttt{eq[pl[tm[4,pl[tm[4,1],3]],2],tm[5,6]]}\] and the proof of $2\cdot(4\cdot 1+1)\in\bbbn_0$ (by the sequence of theorems: $2\in\bbbn_0$ by \textsf{2nn0}, $1\in\bbbn_0$ by \textsf{1nn0}, $4\cdot 1+1\in\bbbn_0$ by \textsf{decclc}, $2\cdot(4\cdot 1+1)\in\bbbn_0$ by \textsf{nn0mulcli}) is:
\begin{verbbox}
{{{{}, elN0[2], "2nn0"},
  {{{{}, elN0[1], "1nn0"},
    {{}, elN0[1], "1nn0"}},
   elN0[pl[tm[4,1],1]], "decclc"}},
 elN0[tm[2,pl[tm[4,1],1]]], "nn0mulcli"}
\end{verbbox}
\begin{figure}[H]
  \centering
  \squeezeup
  \theverbbox
  \squeezeup
\end{figure}
(There are more efficient storage mechanisms, but this one is relatively easy to take apart and reorganize. Furthermore, since it only needs to run once in order to produce the proof, we are much more concerned with the length of the output proof than the speed of the proof generation itself.) For intermediate steps, we will also have use for the ``proof stubs'' \texttt{\{Null,}$e$\texttt{,"?"\}} (representing a proof with goal expression $e$ that has not been completed) and \texttt{\{\$Failed,}$e$\texttt{,"?"\}} (for a step that is impossible to prove or lies outside the domain of the prover).

Given a term expression, we can evaluate it easily using a pattern-matching function:
\begin{verbatim}
eval[pl[a_, b_]] := eval[a] + eval[b]
eval[tm[a_, b_]] := eval[a] eval[b]
eval[n_Integer] := n
\end{verbatim}
Then the domain of our theorem prover will be expressions of the form \texttt{eq[}$m$\texttt{,}$n$\texttt{]} where $m$ and $n$ are terms built from the integers $0$-$10$ and \texttt{pl}, \texttt{tm} which satisfy \texttt{eval[}$m$\texttt{] == eval[}$n$\texttt{]}. (As a side effect we will also support \texttt{elN0[}$n$\texttt{]}, and \texttt{elN[}$n$\texttt{]} when \texttt{eval[}$n$\texttt{] != 0}.) We will also need the reverse conversion:
\begin{verbatim}
bb[n_] := If[n < 4, n, pl[tm[4, bb[Quotient[n, 4]]], Mod[n, 4]]]
\end{verbatim}
This is the equivalent of the $[x]$ function from Definition~\ref{df:eval}.

Our algorithm works in reverse from a given goal expression, breaking it down into smaller pieces until we reach the basic facts. The easiest type of proof is closure in $\bbbn_0$:

\begin{verbatim}
prove[elN0[n_Integer]] := 
 With[{s = ToString[n]}, 
  If[n <= 4, {{}, elN0[n], s <> "nn0"}, {{prove[elN[n]]}, elN0[n], 
    "nnnn0i"}]]
prove[x : elN0[pl[tm[4, a_], b_]]] := {{prove@elN0[a], prove@elN0[b]},
   x, "decclc"}
prove[x : elN0[tm[a_, b_]]] := {{prove@elN0[a], prove@elN0[b]}, x, 
  "nn0mulcli"}
prove[x : elN0[pl[a_, b_]]] := {{prove@elN0[a], prove@elN0[b]}, x, 
  "nn0addcli"}
prove[elN[n_Integer]] /; n > 0 := {{}, elN[n], ToString[n] <> "nn"}
prove[elN[x : pl[y : tm[4, a_], b_Integer]]] := 
 If[b === 0, {{{prove@elN0[a], eq[x, y], "dec0u"}, prove@elN[y]}, 
   elN[x], "eqeltri"}, {{prove@elN0[a], prove@elN[b]}, elN[x], 
   "decnnclc"}]
prove[elN[x : tm[4, a_]]] := {{prove@elN[a]}, elN[x], "decnncl"}
\end{verbatim}
This simply breaks up an expression according to its head and applies \textsf{decclc} to numerals, \textsf{nn0addcli} and \textsf{nn0mulcli} to integer addition and multiplication, and theorems \textsf{0nn0}, \textsf{1nn0}, \dots, \textsf{4nn0}, \textsf{5nn}, \dots, \textsf{10nn} to the integers $0$-$10$ (where we switch to $\bbbn$ closure for numbers larger than $4$ because we do not have $\bbbn_0$ closure theorems prepared for these). Similarly, we can do closure for $\bbbn$:

\begin{verbatim}
prove[elN[n_Integer]] /; n > 0 := {{}, elN[n], ToString[n] <> "nn"}
prove[elN[x : pl[y : tm[4, a_], b_Integer]]] := 
 If[b === 0, {{prove@elN[a]}, elN[x], 
   "decnncl2"}, {{prove@elN0[a], prove@elN[b]}, elN[x], "decnnclc"}]
prove[elN[x : tm[4, a_]]] := {{prove@elN[a]}, elN[x], "decnncl"}
\end{verbatim}
This is just an implementation of Theorem~\ref{th:nncl}.

In order to do arbitrary equalities, we first ensure that both arguments are numerals, by chaining $x=[x]<[y]=y$ for inequalities and $x=[x]=y$ for equalities (where $[x]$ and $[y]$ are identical since we are assuming that the equality we are proving is in fact correct). We also allow the case ``$x<4$'' where $x$ is a numeral even though $4$ is not a numeral, because it comes up often and we already have theorems for this case.
\begin{verbatim}
prove[lt[x_, y_]] /; eval[x] < eval[y] := 
 If[x === bb@eval[x], 
  If[y === bb@eval[y] || y === 4, 
   provelt[x, y], {{prove@lt[x, bb@eval[y]], proveeq2[y]}, lt[x, y], 
    "breqtri"}], {{proveeq2[x], prove@lt[bb@eval[x], y]}, lt[x, y], 
   "eqbrtrri"}]
prove[eq[x_, y_]] /; eval[x] == eval[y] := 
 If[x === bb@eval[x], 
  proveeq2[y], {{proveeq2[x], proveeq2[y]}, eq[x, y], "eqtr3i"}]
proveeq2[x_] := proveeq[bb@eval[x], x]
\end{verbatim}

It remains to define \texttt{provelt} and \texttt{proveeq}. We start with \texttt{provelt}, implementing Theorem~\ref{th:lt}.
\begin{verbatim}
provelt[x_Integer, y_Integer] := {{}, lt[x, y], 
  ToString[x] <> "lt" <> ToString[y]}
provelt[x_Integer, 
  y : pl[tm[4, a_], b_]] := {{prove@elN[a], prove@elN0[b], 
   prove@elN0[x], provelt[x, 4]}, lt[x, y], "declti"}
provelt[x : pl[tm[4, a_], c_], y : pl[tm[4, b_], d_]] := 
 If[a === b, {{prove@elN0[a], prove@elN0[c], prove@elN[d], 
    provelt[c, d]}, lt[x, y], "declt"},
  {{prove@elN0[a], prove@elN0[b], prove@elN0[c], 
    prove@elN0[d], provelt[c, 4], provelt[a, b]}, lt[x, y], "decltc"}]
\end{verbatim}

The contract of \texttt{proveeq}$[x,y]$ is such that it returns a proof of $\vdash x=y$ given an expression $y$, assuming $x=[y]$ (since we can calculate $x$ from $y$, the left argument is not necessary, as in the variant \texttt{proveeq2}, but it simplifies pattern matching), and it is defined much the same as previous functions; we elide it here due to space constraints.

\subsubsection{Basic facts.} One fine point which may need addressing, since it was largely glossed over in Theorem~\ref{th:basic}, is the algorithm for basic facts. We define a function \texttt{basiceq[x\_]} which is valid when \texttt{x} is either \texttt{pl}$[m,n]$ or \texttt{tm}$[m,n]$ and \texttt{eval[x]}$\le10$; in this case it corresponds to a ``basic fact'' of either addition or multiplication, and the return value is a proof of \texttt{eq[eval[x], x]}. There are many special cases, but every such statement follows from \textsf{addid1i}, \textsf{addid2i} (addition with $0$), \textsf{mulid1i}, \textsf{mulid2i} (multiplication by $1$), \textsf{mul01i}, \textsf{mul02i} (multiplication by $0$), \textsf{df-2}, \dots, \textsf{df-10} (addition with $1$), or a named theorem like \textsf{3p2e6} for $3\cdot2=6$ -- these exist for each valid triple containing numbers larger than $1$ -- possibly followed with \textsf{addcomi}, \textsf{mulcomi} (commutation) and/or \textsf{eqcomi}, \textsf{eqtri} (symmetry/transitivity of equality) to tie the components together.

\section{Results}\label{sec:results}

In this section we will describe the purpose to which we applied the arithmetic algorithm.

\subsection{Prime Numbers}
There are several ways to prove that a number is prime, and the relative efficiency can depend a lot on the size of the numbers involved. For small numbers, especially if it is necessary to find all primes below a cutoff, the most efficient method is simple trial division. The biggest improvement in efficiency here is gained by looking only at primes less than $\sqrt n$ in a proof that $n$ is prime. Starting from the two primes $2,3$ which are proven ``from first principles'', we can use this to show that a number less than $25$ which is not divisible by $2$ or $3$ is prime, and we reduce these primality/compositeness deductions to integer statements via
$$\inference{ndvdsi}{\vdash q\in\bbbn_0\quad \vdash a,r\in\bbbn\quad \vdash b=aq+r\quad \vdash r<a}{\vdash a\nmid b}$$
$$\inference{nprmi}{\vdash a,b\in\bbbn\quad \vdash 1<a\quad \vdash 1<b\quad \vdash n=ab}{\vdash n\notin\bbbp}.$$
As a consequence of our choice of base $4$, we also have easy proofs of compositeness or non-divisibility by $2$:
$$\inference{dec2dvds1}{\vdash a\in\bbbn_0}{\vdash 2\nmid4a+1}\quad \inference{dec2dvds3}{\vdash a\in\bbbn_0}{\vdash 2\nmid4a+3}$$
$$\inference{dec2nprm}{\vdash a\in\bbbn}{\vdash 4a+2\notin\bbbp}$$

This yields proofs of $5,7,11,13,17,19,23\in\bbbp$. We repeat the process to show that a number less than $29^2=841$ and which is not divisible by $2,3,5,7,11,13,17,\allowbreak 19,23$ is prime (the upper bound is $29^2$ because $29$ is the first prime larger than $23$), and we use this theorem to prove primality of $37,43,83,139,163,317,631$. There are three more primes needed for the prime sequence in Bertrand's postulate, namely $1259,2503,4001$, and we would need many more primes to repeat the process with a still-larger upper bound, so we switch methods.

\subsubsection{Pocklington's Theorem.}
\begin{theorem} If $N>1$ is an integer such that $N-1=AB$ with $A>B$, and for every prime factor $p$ of $A$ there is an $a_p$ such that $a_p^{N-1}\equiv 1\pmod N$ and $\gcd(a_p^{(N-1)/p}-1,N)=1$, then $N$ is prime.
\end{theorem}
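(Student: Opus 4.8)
The plan is to follow the classical argument for Pocklington's criterion: assume $N$ is composite and derive a contradiction by showing that its least prime factor would have to be impossibly large. So first I would let $q$ be the smallest prime dividing $N$ (which exists since $N>1$ is composite), so that $q^2\le N$; and from $A>B$ and $AB=N-1$ I would record the elementary bound $A^2\ge A(B+1)=(N-1)+A>N-1$, hence $A^2\ge N$ (both sides being integers) and so $A\ge q$. The endpoint of the argument will be to prove $A\mid q-1$; since $q\ge2$ makes $q-1$ a positive integer, this forces $A\le q-1<q\le A$, the desired contradiction, so $N$ must be prime.

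The core step, which I would isolate as a lemma, is that $p^{e}\mid q-1$ for every prime $p\mid A$, where $e=v_p(A)$ is the exact exponent of $p$ in $A$. To prove it I would fix such a $p$, put $a=a_p$, and work with the multiplicative order $d$ of $a$ modulo $q$ (well-defined because $a^{N-1}\equiv1\pmod N$ forces $\gcd(a,N)=1$, hence $\gcd(a,q)=1$). Reducing the hypotheses modulo $q$ (using $q\mid N$): $a^{N-1}\equiv1\pmod q$ gives $d\mid N-1$, while $\gcd(a^{(N-1)/p}-1,N)=1$ gives $q\nmid a^{(N-1)/p}-1$, i.e. $a^{(N-1)/p}\not\equiv1\pmod q$, hence $d\nmid(N-1)/p$. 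Setting $f=v_p(N-1)\ge e$ (note $p\mid A\mid N-1$, so $(N-1)/p$ really is an integer), the divisibility $d\mid N-1$ together with the non-divisibility $d\nmid(N-1)/p$ pins down $v_p(d)=f$; and Fermat's little theorem gives $d\mid q-1$, so $v_p(q-1)\ge f\ge e$, which is $p^e\mid q-1$.

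Finally I would assemble these congruences: since $A=\prod_{p\mid A}p^{v_p(A)}$ with the factors $p^{v_p(A)}$ pairwise coprime and each dividing $q-1$, their product $A$ divides $q-1$ — equivalently, $v_p(A)\le v_p(q-1)$ holds for every prime $p$ (vacuously when $p\nmid A$). I expect this last assembly step to be the main obstacle in a formalization: turning a statement quantified over the prime factors of $A$ into the single divisibility $A\mid q-1$ needs either an induction on the factorization of $A$ or, more conveniently in \texttt{set.mm}, the library fact that $a\mid b$ is equivalent to $v_p(a)\le v_p(b)$ for all primes $p$. By contrast, the supporting ingredients — existence and basic properties of the order mod a prime ($a^k\equiv1\pmod q\iff\mathrm{ord}_q(a)\mid k$, and $\mathrm{ord}_q(a)\mid q-1$), the least-prime-factor bound for composites, and the inequality $A\ge q$ — are routine given the relevant lemmas, and the whole argument is of course purely ``framing'' with no large-number arithmetic beyond what the decimal machinery of Section~\ref{sec:mmth} already handles.
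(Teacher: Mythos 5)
Your argument is the standard (and correct) proof of Pocklington's criterion: bounding the least prime factor $q$ of a hypothetical composite $N$ by $q\le\sqrt N\le A$, pinning down $v_p(\mathrm{ord}_q(a_p))=v_p(N-1)$ from the two congruence hypotheses, concluding $p^{v_p(A)}\mid q-1$ for each $p\mid A$ and hence $A\mid q-1$, which contradicts $A\ge q$. I see no gap; the only minor points worth making explicit in a formalization are that $a_p^{N-1}\equiv1\pmod N$ gives invertibility of $a_p$ modulo $N$ (hence modulo $q$), and that $A\ge1$ is what upgrades $A^2>N-1$ to $A^2\ge N$ — both of which you effectively use. There is nothing in the paper to compare against: the theorem is quoted there without proof, the text only pointing to its formalization in \texttt{set.mm} as \textsf{pockthi}. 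Notably, that formalized version assumes $A=p^e$ is a prime power, which sidesteps exactly the ``assembly'' step you single out as the main formalization obstacle (passing from a statement quantified over the prime factors of $A$ to the single divisibility $A\mid q-1$); in the prime-power case one only needs the order argument for a single $p$, at the cost of a less general certificate. Your more general route buys the full Pocklington statement but would indeed require either induction over the factorization of $A$ or a valuation-based divisibility criterion as you suggest.
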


This theorem has been proven in \texttt{set.mm} in decimal-friendly form (in the case when $A=p^e$ is a prime power) as
$$\inference{pockthi}{\begin{array}{c}
  \vdash p\in\bbbp\quad \vdash g,B,e,a\in\bbbn\quad \vdash m=gp\quad \vdash N=m+1\quad \vdash m=Bp^e\\
  \vdash B<p^e\quad \vdash a^m\equiv 1\pmod N\quad \vdash\gcd(a^g-1,N)=1\end{array}}{\vdash N\in\bbbp}.$$

Ignoring the $p^e$ term which is easily evaluated by writing it out as a product of $p$'s since $p^e<N$ is relatively small, there are two new kinds of integer statements involved here, $a\equiv b\pmod N$ and $\gcd(c,d)=e$. We can further narrow our concern to statements of the form $a^m\equiv b\pmod N$ and $\gcd(c,d)=1$ (where $c\ge d$), and we can reduce the second via Euclid's algorithm in the form
$$\inference{gcdi}{\vdash k,r,n\in\bbbn_0\quad \vdash m=kn+r\quad \vdash \gcd(n,r)=g}{\vdash \gcd(m,n)=g}.$$
For the power mod calculations, we used addition-chain exponentiation on manually selected chains which had good calculational properties (small intermediate calculations), since the hardest step in the calculation
$$\inference{modxai}{\vdash e=b+c\quad \vdash dn+m=kl\quad \vdash a^b\equiv k,\ a^c\equiv l \pmod n}{\vdash a^e\equiv m\pmod n}\footnote{closure assumptions elided}$$
is evaluating the expression $dn+m=kl$, whose proof length is driven by the size of $k,l$, so that exponents with smaller reduced forms yield shorter proofs. In the worst case, for $N=4001$ we are calculating products on the order $kl\le N^2=16\,008\,001$, which are roughly $12$-digit numbers in base $4$.

\subsection{Bertrand's Postulate Gets the Last Laugh}
As mentioned in Section~\ref{sec:bpos}, the framework described in this paper was developed in preparation for performing the large calculations needed to prove that numbers like $4001$ are prime. After this work was completed, we discovered that there was a new proof by Shigenori Tochiori \cite{tochiori} (unfortunately untranslated to my knowledge) which, by strengthening the estimates in Erd\H{o}s' proof, manages to prove the asymptotic part for $n\ge 64$ (instead of $n\ge4000$), so that the explicit enumeration of primes above this became unnecessary for the completion of the proof. Nevertheless, the proof of $4001\in\bbbp$ remains as a good example of a complicated arithmetic proof, and we expect that this arithmetic system will make it much easier to handle such problems in the future, and \textsf{bpos} is now completed in any case.

\subsection{Large Proofs}
One recent formal proof which has gathered some attention is Thomas Hales' proof of the Kepler Conjecture, also known as the Flyspeck project \cite{flyspeck}, and it highlights one foundational issue regarding Metamath's prospects in the QED vision of the future \cite{qed} -- \emph{which} and \emph{how much} resources are stressed by projects like this with a large computational component? The design of Metamath is such that it can verify a proof in nearly linear time, assuming that it can store the entire database of theorem statements in memory, because at each step it need only verify that the substitutions to the theorem statement are in fact done correctly (and the substitutions themselves are stored as part of the proof, even though they can be automatically derived with more sophisticated and slower algorithms). Thus the primary bottleneck in verification time is the length of the proof itself, and we can analyze this quite easily for our chosen algorithm.

Since we are essentially employing grade-school addition and multiplication algorithms, it is easy to see that they are $O(n)$ and $O(n^2)$ respectively, and with more advanced multiplication algorithms we could lower that to $O(n^{1.5})$ or lower. Indeed, there does not seem to be any essential difference between the number of steps in a Metamath proof and the number of cycles that a computer might go through to perform the equivalent algorithm, even though a Metamath proof doesn't ``run'' per se as it is a proof and not a program. (In fact a Metamath proof has at least one big advantage over a computer in that it can ``guess the right answer'' in the manner of a non-deterministic Turing machine.) To take this example to its conceptual extreme, we could even simulate an ALU with addition and multiplication of integers representing data values of a computer, and then the progress of the proof would directly correspond to the steps in a computer program. Of course this would introduce a ridiculously large constant, but it would suggest that any program, including a verifier for another formal system such as the HOL Light system in which Flyspeck runs, can be emulated with a proof whose length is comparable to the running time of the verifier without a change in the overall asymptotics.

\subsubsection*{Acknowledgments.} The author wishes to thank Norman Megill for discussions leading to the developments of Section~\ref{sec:mmth}, N. Megill and Stefan O'Rear for reviewing early drafts of this work, and the many online Japanese language learning resources that assisted in translating and eventually formalizing \cite{tochiori}.

\end{document}